\newcounter{citedtheorems}
\numberwithin{equation}{section}
\newtheorem{defn}{Definition}[section]
\newtheorem{theorem}[defn]{Theorem}
\newtheorem*{theorem-m}{Theorem \ref{main-theorem}}
\newtheorem*{thm-p2a}{Theorem \ref{t:p2a}}
\newtheorem*{thm-seq}{Theorem \ref{t:seq}}
\newtheorem*{thm-m}{Main Theorem}
\newtheorem*{theorem-abs1}{Theorem \ref{ind-theorem}}
\newtheorem*{theorem-abs2}{Theorem \ref{a23}}
\newtheorem*{theorem-abs3}{Theorem \ref{ind-new}}
\newtheorem*{theorem-abs4}{Theorem \ref{m1}}
\newtheorem*{thm-x}{Theorem}
\newtheorem{thm-lit}[citedtheorems]{Theorem}
\newtheorem{defn-lit}[citedtheorems]{Definition}
\newtheorem{fact-lit}[citedtheorems]{Fact}
\newtheorem{fact}[defn]{Fact}
\newtheorem{defn-claim}[defn]{Definition/Claim}
\newtheorem*{defn-in}{Definition \arabic{section}.\arabic{equation}}
\newtheorem*{claim-in}{Claim \arabic{section}.\arabic{equation}}
\newtheorem{conv}[defn]{Convention}
\newtheorem{lemma}[defn]{Lemma}
\newtheorem{expl}[defn]{Example}
\newcommand{\lost}{\L os' }
\newcommand{\br}{\vspace{2mm}}
\newcommand{\scbr}{\vspace{3mm}}
\newcommand{\gee}{\mathcal{G}}
\newcommand{\ml}{\mathcal{L}}
\newcommand{\tlf}{\trianglelefteq}
\newcommand{\rn}{\operatorname{range}}
\newcommand{{\xw}}{\mathbf{w}}
\newcommand{\mcf}{\mathcal{F}}
\newcommand{\mcd}{\mathcal{D}}
\newcommand{\cof}{\operatorname{cof}}
\newcommand{\de}{\mathcal{D}}
\newcommand{\jj}{\mathbf{j}}
\newcommand{\mcp}{\mathcal{P}}
\newcommand{\ba}{\mathfrak{B}}
\newcommand{\lao}{[\lambda]^{<\aleph_0}}
\newcommand{\vp}{\varphi}
\newcommand{\lcf}{\operatorname{lcf}}
\newcommand{\mb}{\mathbf{b}}
\newcommand{\mc}{\mathbf{c}}
\newcommand{\mx}{\mathbf{x}}
\newcommand{\psf}{\operatorname{psf}}
\newcommand{\mfd}{\mathfrak{D}}
\newcommand{\fa}{\mathfrak{D}}
\newcommand{\ld}{\mathbb{D}}
\title{On ultrafilter construction}
\author{Maryanthe Malliaris}\thanks{\emph{Thanks:} Partially supported by NSF-BSF 2051825, and 
based in part on a lecture of the author in Vienna in July 2025.}
\address{Department of Mathematics, University of Chicago, 5734 S. University Avenue, Chicago, IL 60637, USA} 
\email{mem@math.uchicago.edu}
\begin{document}

\begin{abstract}
We give a model-theoretic perspective on regular ultrafilter construction 
in the twentieth and twenty-first century (so far), and  
explain the ``canonical Boolean algebra'' recently 
developed by Malliaris and Shelah. 
\end{abstract}

\maketitle

\br

\hfill{\emph{Dedicated to Saharon Shelah on the occasion of his birthday.}} 

\vspace{5mm}

This article sketches some milestones in regular ultrafilter construction, 
from a model theoretic point of view. 
It points to a book manuscript \cite{MiSh:F2356} in progress, where  
construction of regular ultrafilters is one thread 
in a broader classification program. In later sections of the article, the 
``canonical Boolean algebra'' defined and developed in \cite{MiSh:F2356} 
is motivated and some first theorems are stated.  
Below, it will be useful to know the definition of ultraproduct and the 
fundamental theorem, along with basic model theory such as types and saturation.

Here the discussion is informal, and personal, not aiming to be historically complete, 
rather highlighting certain advances and shifts in understanding.  The hope is to 
provide an accessible, brief introduction to some of these beautiful ideas.

\scbr
\section{Beginnings}

Recall that $\mcd \subseteq \mcp(I)$ is a filter if it is: 
\begin{enumerate}
\item nonempty: (by definition, or by the next item) $I \in \mcd$
\item upward closed: $A \in \mcd$, $A \subseteq B \subseteq I$ implies $B \in \mcd$
\item closed under pairwise, hence finite, intersection: if $A, B \in \mcd$, ~$A \cap B \in \mcd$ 
\item nontrivial: $\emptyset \notin \mcd$.  
\end{enumerate}
When $I$ is a set, often a cardinal, we say $\mcd$ is a filter ``on $I$'' to mean 
``on $\mcp(I)$.''  
Recall that a filter $\mcd$ on $I$ is an \emph{ultrafilter} if for any $A \subseteq I$, exactly one of 
$A$ and $I \setminus A$ belongs to $\mcd$. 
 
This definition makes sense for any Boolean algebra $\ba$, not only $\mcp(I)$, 
replacing 
$I$ by $1_\ba$ in (1), $\emptyset$ by $0_\ba$ in (2), and restricting to elements of $\ba$
in (2) and (3).\footnote{Still we continue the convention of saying 
``filter on $I$'' for sets $I$, often cardinals, and ``filter on $\ba$'' for
Boolean algebras $\ba$, despite the overlap.} 
\footnote{Indeed from the early days of model theory, 
ultrafilters are present not only in ultrapowers 
but also in complete theories and complete types over a set $A$
(ultrafilters on the Lindenbaum algebra of
formulas in $n \geq 0$ free variables, and in the second case parameters from $A$, identified up to
logical equivalence), as is emphasized in \cite{mm-icm}. 
On ultrapowers and their fundamental theorem, see  
\cite{CK}, \S 4.1. 
Ultrafilters have a long pre-history in general topology.}

\br For awhile fix a set $I$. 
We can build ultrafilters on $I$ by Zorn's lemma, 
but this gives us relatively little control over the outcome. 
A more interesting idea was transfinite induction:  

\scbr
\section{Transfinite induction}

\begin{defn} A collection of sets $X \subseteq \mcp(I)$ has the FIP, finite intersection property, 
if the intersection of any finitely many elements of $X$ is nonempty. 
\end{defn}

If $X$ has FIP, it generates a filter 
$\langle X \rangle = \{ Y \subseteq I : $ for some $X_1, \dots, X_k \in X$, $ X_1 \cap \cdots \cap X_k \subseteq Y \}$, by closing under finite intersection and then closing upwards.  

As FIP is preserved under unions of chains, the 
key lemma for a transfinite induction is: 

\begin{lemma} \label{fip-lemma} 
If $\mcd \subseteq \mcp(I)$ has FIP and $A \subseteq I$, then at least one of $\mcd \cup \{ A \}$ or 
$\mcd \cup \{ I \setminus A \}$ has FIP. 
\end{lemma}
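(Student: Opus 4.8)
The plan is to argue by contradiction, exploiting the fact that failure of FIP is always witnessed by a \emph{finite} subfamily. So suppose toward a contradiction that neither $\mcd \cup \{A\}$ nor $\mcd \cup \{I \setminus A\}$ has FIP. Since $\mcd$ itself has FIP, any finite subfamily of $\mcd \cup \{A\}$ with empty intersection must actually include the new set $A$; likewise for $I \setminus A$. This lets me extract two finite witnesses: elements $D_1, \dots, D_k \in \mcd$ with $D_1 \cap \cdots \cap D_k \cap A = \emptyset$, and elements $E_1, \dots, E_m \in \mcd$ with $E_1 \cap \cdots \cap E_m \cap (I \setminus A) = \emptyset$.

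Next I would set $D = D_1 \cap \cdots \cap D_k$ and $E = E_1 \cap \cdots \cap E_m$ and translate the two emptiness conditions into containments. The first says $D \cap A = \emptyset$, i.e.\ $D \subseteq I \setminus A$; the second says $E \cap (I \setminus A) = \emptyset$, i.e.\ $E \subseteq A$. The key step is then to intersect: $D \cap E \subseteq (I \setminus A) \cap A = \emptyset$, so $D \cap E = \emptyset$.

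But $D \cap E = D_1 \cap \cdots \cap D_k \cap E_1 \cap \cdots \cap E_m$ is an intersection of finitely many members of $\mcd$, which is nonempty because $\mcd$ has FIP. This contradiction completes the argument. I would not expect any genuine obstacle here; the only point requiring a word of care is the observation that each failing finite subfamily really must contain the newly added set (otherwise it would already be a finite subfamily of $\mcd$ with empty intersection, contradicting FIP of $\mcd$), which is what licenses pulling $A$ and $I \setminus A$ out as separate factors.
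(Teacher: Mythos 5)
Your proof is correct and is essentially the argument the paper gives: both extract finite witnesses $D_1,\dots,D_k, E_1,\dots,E_m \in \mcd$ whose combined intersection must be empty, contradicting FIP of $\mcd$. The only difference is that you spell out the (correct) observation that the failing subfamilies must contain $A$ and $I\setminus A$ respectively, which the paper leaves implicit.
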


\begin{proof}
If there exist $Y_1, \dots, Y_k \in \mcd$ with $Y_1 \cap \cdots \cap Y_k \cap A = \emptyset$, 
and $Z_1, \dots, Z_\ell \in \mcd$ with $Z_1 \cap \cdots \cap Z_\ell \cap (I \setminus A) = \emptyset$, 
then $Y_1 \cap \cdots \cap Y_k \cap Z_1 \cap \cdots \cap Z_\ell = \emptyset$ contradicts 
the FIP for $\mcd$. 
\end{proof}

Thus we can construct an ultrafilter by enumerating the subsets of $I$ and making a decision about them one 
by one while preserving FIP. 
Observe that this construction is inefficient: at many successor steps, applying               
Lemma \ref{fip-lemma}, we did not have a free choice
(for instance, the decision on intersections or complements of sets earlier 
in the enumeration may be forced). 

\scbr
\section{Independent families} 

When do we have a free choice at the successor step? Lemma \ref{fip-lemma} suggests: when both 
$A$ and $I \setminus A$ are consistent with any finite intersections so far. This suggests 
an idea of a basis for the set $\mcp(I)$, or an \emph{independent family of sets}, 
capturing a maximal set of mutually independent such choices. Write $B^1 = B$, 
$B^0 = I \setminus B$.  
This next definition goes back at least to Hausdorff. 

\begin{defn} 
A family of sets $\{ B_\alpha : \alpha < \kappa \} \subseteq \mcp(I)$ is called 
\emph{independent} if for any $n<\omega$, $\alpha_1 < \cdots < \alpha_n < \kappa$ and 
$t_1, \dots, t_n \in \{ 0, 1 \}$, 
\[ \bigcap_{1 \leq \ell \leq n} B^{t_\ell}_{\alpha_\ell} \neq \emptyset. \]
\end{defn} 

Hausdorff proved that over any infinite set $I$ there is an independent family of 
sets of size $2^{|I|}$. Since the property of being an independent family is closed under 
unions of chains, we can consider a maximal such family: restricting our attention 
to elements of this family, we can build an ultrafilter by transfinite induction 
and at every successor step, Lemma \ref{fip-lemma} presents us with a real choice.   

Now, however, we can identify each $B_\alpha$ with its characteristic function. 
Call the family of functions 
$\gee \subseteq {^I 2}$ \emph{independent} if for every $n <\omega$ and every 
distinct $g_0, \dots, g_{n-1} \in \gee$ and every $j_0, \dots, j_{n-1} < 2$, 
\[ \{ t \in I : g_0(t) = j_0 \land \cdots \land g_{n-1}(t) = j_{n-1} \} \neq \emptyset. \]
This shift in perspective suggests the next idea: why bound the range at 2?

\begin{theorem}[Engelking-Karlowicz...] \label{t:ind-family} 
For any $\lambda \geq \kappa \geq \aleph_0$ there exists $\mcf \subseteq {^\lambda \kappa}$ which is 
an independent family of functions of size $2^\lambda$. 
This means: each $f \in \mcf$ has 
domain $\lambda$ and range $\kappa$, and for any $n <\omega$ and every 
distinct $g_0, \dots, g_{n-1} \in \gee$ and every $j_0, \dots, j_{n-1} < \kappa$, 
\[ \{ t \in I : g_0(t) = j_0 \land \cdots \land g_{n-1}(t) = j_{n-1} \} \neq \emptyset. \] 
\end{theorem}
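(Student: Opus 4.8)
The plan is to avoid constructing the functions on $\lambda$ directly; instead I would build an independent family on an auxiliary index set $I$ that happens to have cardinality $\lambda$, index the family by the subsets of $\lambda$ so that there are automatically $2^\lambda$ of them, and only at the very end transport everything back to domain $\lambda$ along a bijection. The guiding intuition is that a point of the index set should be a ``finite experiment'' rich enough both to separate any finitely many of the index sets $A \subseteq \lambda$ and to assign them prescribed values.

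Concretely, I would set
$$ I = \{ (s,h) : s \in \lao \text{ and } h \colon \mcp(s) \to \kappa \}, $$
and for each $A \subseteq \lambda$ define $f_A \colon I \to \kappa$ by $f_A(s,h) = h(A \cap s)$; this is well defined since $A \cap s \in \mcp(s)$. First I would check the cardinality bookkeeping, which is exactly where the hypotheses enter: there are $\lambda$ finite subsets $s$ of $\lambda$, and for each such $s$ the set $\mcp(s)$ is finite, so there are $\kappa^{|\mcp(s)|} = \kappa$ choices of $h$; hence $|I| = \lambda \cdot \kappa = \lambda$, using $\aleph_0 \leq \kappa \leq \lambda$.

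The heart of the argument, and the step I expect to require the real idea, is verifying independence. Given distinct $A_0, \dots, A_{n-1} \subseteq \lambda$ and prescribed values $j_0, \dots, j_{n-1} < \kappa$, I want a single point $(s,h) \in I$ with $h(A_\ell \cap s) = j_\ell$ for every $\ell$. For each pair $\ell < \ell'$ the sets $A_\ell, A_{\ell'}$ differ, so I can pick a witness $\alpha_{\ell,\ell'}$ lying in exactly one of them; letting $s$ be the finite set of all these witnesses forces the traces $A_0 \cap s, \dots, A_{n-1} \cap s$ to be pairwise distinct elements of $\mcp(s)$. Since they are distinct, the assignment $A_\ell \cap s \mapsto j_\ell$ is a consistent partial function, so it extends to some $h \colon \mcp(s) \to \kappa$, and $(s,h)$ is the desired point showing the relevant intersection is nonempty.

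Finally I would note that the $n=2$, $j_0 \neq j_1$ instance of independence already shows $A \mapsto f_A$ is injective, so $\mcf = \{ f_A : A \subseteq \lambda \}$ has size $2^\lambda$; and since $|I| = \lambda$, composing with a bijection $\lambda \to I$ turns each $f_A$ into a function $\lambda \to \kappa$ without disturbing independence. The only genuine obstacle is the design of $I$: once one sees that a coordinate should package a finite ``resolution'' $s$ together with a labelling $h$ of the $2^{|s|}$ possible traces, both the counting and the separation argument fall out routinely.
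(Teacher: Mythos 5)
Your proof is correct, and it is the standard Engelking--Kar\l owicz argument: index points by pairs $(s,h)$ with $s$ a finite subset of $\lambda$ and $h$ a $\kappa$-valued labelling of $\mcp(s)$, set $f_A(s,h)=h(A\cap s)$, separate finitely many distinct $A_\ell$ by witnesses in their symmetric differences, and transport along a bijection $I\to\lambda$ at the end. The paper itself states this theorem only as a citation and gives no proof, so there is nothing to compare against; your cardinality bookkeeping ($|I|=\lambda\cdot\kappa=\lambda$, injectivity of $A\mapsto f_A$ giving $2^\lambda$ functions) and the independence verification are both complete and correct.
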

Looking towards later inductions, we can also define 
``independent modulo the filter $\de$'' by changing ``$\neq \emptyset$'' in the last line of
$\ref{t:ind-family}$ to be ``$\neq \emptyset \mod \de$.''  
(A set $A$ is equal to $\emptyset$ modulo
a filter if $(I \setminus A) \in \de$.)
\scbr 
\section{Regularity}

\begin{defn}
A filter $\de$ on $I$, $|I| = \lambda$ is \emph{regular} if there exists 
$\{ X_\alpha : \alpha < \lambda \} \subseteq \de$, called a \emph{regularizing family}, 
such that for any infinite $\sigma \subseteq \lambda$, 
\[ \bigcap_{\alpha \in \sigma} X_\alpha ~ = \emptyset. \]
\end{defn}

Consistently, all ultrafilters are regular. 
They are easy to build: fix $\lambda$, let 
$I = [\lambda]^{<\aleph_0}$ be the set of finite subsets of $\lambda$, and 
observe that $\{ \{ u \in I : \alpha \in u \} : \alpha \in \lambda \}$ 
is a regularizing family and has the FIP. 

A useful property of regular ultrafilters is that regular ultrapowers 
(=ultrapowers where the ultrafilter in question is regular) have the full size of the Cartesian
power: if $M$ is an infinite model and $\de$ is a regular ultrafilter on $I$,
then
\[ |M^I/\de| = |M|^{|I|}. \]

An early use of regular ultrafilters was to prove the compactness theorem 
via ultraproducts. Let $\de$ be a regular ultrafilter on $I$ 
extending the regularizing family just built. Let $T = \langle \vp_\alpha : \alpha < \lambda \rangle$ be a theory 
such that every finite subset of $T$ has a model. 
For each $u \in I$, that is, for each $u \in [\lambda]^{<\aleph_0}$, choose 
$M_u \models \{ \vp_\alpha : \alpha \in u \}$ to be a model of the corresponding 
finite subset of $T$.  
Then each $\vp_\alpha$ will hold in $M_u$ for at least those 
$\{ u \in I : \alpha \in u \} \in \de$, 
so by the fundamental theorem of ultraproducts, the ultraproduct is a model of $T$. 

It turns out that in regular ultrapowers, any \emph{countable} 
partial type --  
that is, a set of formulas $\{ \vp_\alpha(x,\bar{a}_\alpha) : \alpha < \omega \}$  
with the property that any finitely many are simultaneously satisfiable --   
will be realized. So regular ultrapowers of countable theories are $\aleph_1$-saturated. Achieving more saturation is a much more subtle question.  
We will return to this.

\scbr 
\section{Characterizing elementary equivalence}

An important problem in the 1960s was the following. Suppose $M, N$ are models in the same 
vocabulary $\ml$. Defining ``$M \equiv N$'' (the same first-order sentences hold in both models) 
requires many steps: you need a notion of logic, of 
sentences, of truth in a model. Is there an ``algebraic'' 
or ``outside'' characterization of the equivalence relation on models 
given by $\equiv$? 

This was answered by Keisler under GCH, and some time later Shelah removed GCH, so 
the answer is known as the Keisler-Shelah isomorphism theorem: \emph{$M \equiv N$ 
if and only if $M$, $N$ have isomorphic ultrapowers.} Keisler's proof under GCH 
goes as follows. Fix $\lambda \geq \max \{ |M|, |N|, |\ml| \}$. Then:
\begin{enumerate}
\item[(a)] There exists a regular \emph{good} ultrafilter $\de$ on $\lambda$ (Keisler's proof used GCH). 
\item[(b)] If $\de$ is a regular good ultrafilter on $\lambda \geq |\ml|$ and $M$ is an $\ml$-structure, 
then $M^\lambda/\de$ is $\lambda^+$-saturated. 
\item[(c)] By the fundamental theorem of ultraproducts $M^\lambda/\de \equiv M$ 
and $N^\lambda/\de \equiv N$, by hypothesis $M \equiv N$, and $\equiv$ is transitive. 
\item[(d)] By regularity, $| M^\lambda/\de | = | N^\lambda/\de | = 2^\lambda$.    
\item[(e)] Assuming $2^\lambda = \lambda^+$, (b)+(d) imply   
$M^\lambda/\de$ and $N^\lambda/\de$ are both saturated. 
\item[(f)] In conclusion, $M^\lambda/\de$, $N^\lambda/\de$ are  
are elementarily equivalent, saturated models of the same size, hence they are isomorphic.  
\end{enumerate}

We will return to Shelah's proof in due course.  
Let us consider this very interesting property called ``good''. 

\scbr 
\section{Good}  

\begin{defn}
A filter $\de$ on $|I|$, $|I| = \lambda$ is called \emph{good} if any monotonic
$f: [\lambda]^{<\aleph_0} \rightarrow \de$ has a multiplicative refinement, where:

\begin{enumerate} 
\item $g$ refines $f$ means $g: [\lambda]^{<\aleph_0} \rightarrow \de$ and 
$g(u) \subseteq f(u)$ 
\item monotonic, really antimonotonic, means $u \subseteq v \implies g(u) \supseteq g(v)$  
\item multiplicative means $g(u) \cap g(v) = g ( u \cup v )$  
\end{enumerate} 
for all $u,v \in [\lambda]^{<\aleph_0}$. 
\end{defn}

It is not hard to see that any nonprincipal ultrafilter on a countable set is good.  

Why do such ultrafilters exist in general, and why do they produce saturation in ultrapowers? 
We start with the first question by sketching 
Kunen's ZFC proof (after Keisler's GCH proof) that good regular ultrafilters exist on any  
infinite $\lambda$. See \cite{kunen}, \cite{keisler-2}.

Looking towards an induction, recall the definition above of an 
\emph{independent family of functions} from $\lambda$ to $\lambda$. 
First prove an analogue of Lemma \ref{fip-lemma}: if $\mcf$ is independent modulo the filter
$\de$ on $I$, and $A \subseteq I$, we can decide $A$ at the cost of throwing away
 finitely many elements of $\mcf$, with the remainder staying 
independent modulo the increased filter. 

Now we have a framework for transfinite induction where at any given stage, the
the elements of the remaining independent family can be seen as representing
independent partitions of $I$
(the fibers of each $f$) into sets which are not yet large or small.

To ensure the outcome is a regular ultrafilter, the base case is given by:

\begin{fact} \label{f:234}
For any $\lambda \geq \kappa \geq \aleph_0$, there exist a regular filter
$\de_0$ on $\lambda$ and a family $\mcf \subseteq
{^\lambda \kappa}$ of functions independent modulo $\de_0$.
\end{fact}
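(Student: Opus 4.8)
The plan is to build a single index set $I$ of size $\lambda$ that carries both structures at once: the finite-subset skeleton that forces regularity, together with enough free room to realize every finite value-pattern of the functions. Concretely, I would take
\[
I = \{ (u, v) : u \in [\lambda]^{<\aleph_0} \text{ and } v : \mcp(u) \to \kappa \}.
\]
Since each $u$ is finite, $\mcp(u)$ is finite, so for each of the $\lambda$ many $u$ there are only $\kappa^{2^{|u|}} = \kappa$ many $v$; hence $|I| = \lambda \cdot \kappa = \lambda$. Fixing a bijection $I \cong \lambda$ at the very end converts functions on $I$ into members of ${}^{\lambda}\kappa$ as the statement requires.

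For regularity, set $X_\alpha = \{(u,v) \in I : \alpha \in u\}$ for $\alpha < \lambda$. A finite intersection $X_{\alpha_1} \cap \cdots \cap X_{\alpha_n}$ is witnessed by any $(u,v)$ with $u = \{\alpha_1, \dots, \alpha_n\}$, so $\{X_\alpha : \alpha < \lambda\}$ has the FIP and generates a filter $\de_0$; and since $u$ is always finite, $\bigcap_{\alpha \in \sigma} X_\alpha = \emptyset$ for every infinite $\sigma \subseteq \lambda$. Thus $\{X_\alpha : \alpha<\lambda\}$ is a regularizing family and $\de_0$ is regular.

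For the independent family, assign to each $A \subseteq \lambda$ the function $f_A : I \to \kappa$ given by $f_A(u,v) = v(A \cap u)$, which is legitimate since $A \cap u \in \mcp(u)$. This produces $2^\lambda$ functions, far more than needed. The crux is to verify independence modulo $\de_0$, and here I would first unwind what positivity means for this generated filter: a set $B$ satisfies $B \neq \emptyset \bmod \de_0$ iff it meets every finite intersection $\bigcap_{\alpha \in F} X_\alpha$, i.e. iff for every finite $F \subseteq \lambda$ there is a point $(u,v) \in B$ with $F \subseteq u$. So, given distinct $A_0, \dots, A_{n-1}$, target values $j_0, \dots, j_{n-1} < \kappa$, and a finite $F$, I must produce $(u,v)$ with $F \subseteq u$ and $v(A_i \cap u) = j_i$ for all $i < n$.

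This is exactly where regularity and independence interact, and it is the only step I expect to present any real obstacle. The danger is a collision $A_i \cap u = A_{i'} \cap u$ with $j_i \neq j_{i'}$, which would make the constraints on $v$ inconsistent. To rule it out, for each pair $i \neq i'$ pick a separating point $\beta_{i,i'} \in A_i \triangle A_{i'}$ (which exists because the $A_i$ are distinct) and enlarge to $u = F \cup \{\beta_{i,i'} : i \neq i'\}$, still finite and still containing $F$. Then $A_0 \cap u, \dots, A_{n-1} \cap u$ are pairwise distinct elements of $\mcp(u)$, so one may simply define $v$ on $\mcp(u)$ by $v(A_i \cap u) = j_i$ (and arbitrarily elsewhere), yielding the required point. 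The same separation argument in the case $n = 2$ shows $A \mapsto f_A$ is injective, so the $f_A$ are genuinely distinct, completing the verification that $\mcf = \{f_A : A \subseteq \lambda\}$ is a family independent modulo the regular filter $\de_0$.
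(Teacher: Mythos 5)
Your proof is correct, and it is exactly the standard construction (the one behind the sources cited for Fact \ref{f:234}, Kunen and Keisler): the paper itself states this Fact without proof, so there is nothing to diverge from. The index set of pairs $(u,v)$ with $u \in [\lambda]^{<\aleph_0}$ and $v:\mcp(u)\to\kappa$, the regularizing sets $X_\alpha = \{(u,v): \alpha \in u\}$, the functions $f_A(u,v) = v(A\cap u)$, and the separating-point argument to avoid collisions $A_i \cap u = A_{i'}\cap u$ are all exactly as in the classical argument, and your cardinality and positivity-mod-$\de_0$ checks are right.
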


So we begin with a regular filter $\de_0$ on $\lambda$ and $\mcf$ 
as in \ref{f:234} for $\kappa = \lambda$. Let $\langle A_\beta : \beta < 2^\lambda \rangle$ enumerate 
$\mcp(\lambda)$, and let $\langle g_\beta : \beta < 2^\lambda \rangle$ enumerate 
all functions from $[\lambda]^{<\aleph_0}$ into $\mcp(I)$, each occurring cofinally often.  
By induction on $\alpha < 2^\lambda$, build an increasing continuous\footnote{At limits 
take unions.} sequence $\langle \de_\alpha : \alpha < 2^\lambda \rangle$ on filters, 
and a decreasing continuous\footnote{At limits take intersections} sequence 
$\langle \mcf_\alpha : \alpha < 2^\lambda \rangle$ of families of functions from $\lambda$ to $\lambda$, so that:
\begin{enumerate}
\item[(a)] $\mcf_\alpha$ is independent modulo $\de_\alpha$
\item[(b)] $| \mcf_{\alpha} \setminus \mcf_{\alpha+1}| < \aleph_0$
\item[(c)] if $\alpha = 2\beta$, $\de_{\alpha+1}$ decides $A_\beta$
\item[(d)] if $\alpha = 2\beta+1$, if $\rn(g_\beta) \subseteq \de_\alpha$, 
then there is $f: [\lambda]^{<\aleph_0} \rightarrow \de_{\alpha+1}$ 
which is a multiplicative refinement for $g_\beta$.    
\end{enumerate} 
Item (c) is a simple way to ensure an ultrafilter.  
Item (d) makes the essential use of the freedom guaranteed by the independent family: 
we can add the range of a well-chosen multiplicative refinement 
to the filter while maintaining FIP (at the cost of a single independent function). 
Since the induction has cofinality at least $\lambda^+$, enumerating 
all $g$'s cofinally often will ensure that if the range of $g$ is in the final 
ultrafilter, it will have been handled at some inductive stage.  
Thus we obtain a good regular ultrafilter on $\lambda$, for any 
infinite $\lambda$, in ZFC.

\scbr
\section{Keisler's order}

In \cite{keisler} Keisler defined\footnote{Perhaps by extracting the
ZFC information from his version of the isomorphism theorem.}
\footnote{Not to be confused with the Rudin-Keisler order on ultrafilters.}
an order on complete countable theories which became central for later work 
(and will be a focus for us here).

Say ``$\de$ saturates $M$'' if $\de$ is a regular ultrafilter on $\lambda$ and $M^\lambda/\de$
is $\lambda^+$-saturated. Keisler proved that if
$\de$ is a regular ultrafilter, $T$ is a complete countable theory, and $M, N$ are infinite
models of $T$, then $\de$ saturates $M$ iff $\de$ saturates $N$.  In other words, saturation depends
on the regular ultrafilter and on the theory, but not the model chosen.
Let us say ``$\de$ saturates $T$'' if $T$ is a complete countable theory
and $\de$ saturates $M$ for some, equivalently every, $M \models T$.
This allows Keisler to define:

\begin{defn}[Keisler's order]
\[ T_1 \tlf T_2 \]
if for every regular ultrafilter $\de$, if $\de$ saturates $T_2$ then $\de$ saturates
$T_1$.
\end{defn}

We will now look more closely at this very interesting definition.

\scbr 
\section{Distributions and saturation}

Let $I$ be an infinite set (as a convention, $|I| = \lambda$),  
$T$ a countable theory, $M$ an infinite model of $T$, 
and $\de$ a regular ultrafilter on $I$. 

\begin{conv} For the rest of the article all theories are complete and countable 
and have only infinite models.\footnote{This is not always needed; often 
``$|\ml| \leq \lambda$ suffices. Among other things this saves having 
to distinguish between ``$\lambda^+$-saturated'' [every type over $\lambda$ 
parameters is realized] and ``$\lambda^+$-compact'' [every partial type 
consisting of $\lambda$ formulas, and hence also at most $\lambda$ parameters,  
is realized].}
\end{conv} 

Recall that a model is $\lambda^+$-saturated if every type over every 
set of $\leq \lambda$ parameters is realized. For types over 
small sets, ultrapowers have strong saturation properties. For example, any 
nonprincipal ultrafilter on $\omega$ produces $\aleph_1$-saturated models, and 
any regular ultrafilter on any infinite set produces $\aleph_1$-saturated models. 

What happens above $\aleph_1$?  Let $\de$ be a regular ultrafilter on $I$, $|I| = \lambda$. 
To see the mechanism, first consider a countable example. 

\begin{expl} 
Let $M = (\mathbb{N}; <)$,  $N = M^I/\de$, 
let $n \mapsto [n]$ be the diagonal embedding of $M$ into $N$, and 
consider the type $p(x) = \{ x > [n] : [n] \in \mathbb{N} \}$ in the ultrapower 
expressing existence of a nonstandard element. 
\end{expl}

This type is necessarily realized because $N$, being a regular ultrapower, has its full size 
$|M|^{|I|} > \aleph_0$; but we don't realize $p$ by satisfying all conditions everywhere. Rather, consider countably many elements of a regularizing family, $X = \{ X_n : n < \omega \} \subseteq \de$ and the map $d: p \rightarrow X$ given by: 
\[  \mbox{``}x > [n] \mbox{''} ~ \mapsto X_n \]
so for each $t \in I$, we have a finite set $p_t = \{ x > n  : t \in X_n \}$ of 
formulas in $M$ to satisfy. Let $c_t$ satisfy $p_t$ in $M$, and then 
$c := \langle c_t : t \in I \rangle /\de$ satisfies $p$ in $N$ by the fundamental theorem of 
ultraproducts. 
In this example, we realize $p$ by scattering the conditions 
across $I$ in such a way 
that: 
\begin{itemize}
\item each condition is addressed on a $\de$-large set.
\item at each $t \in I$, there are at most finitely many conditions to consider. 
\item at each $t \in I$, the conditions assigned to $t$ can be simultaneously satisfied.
\end{itemize} 
As this can be achieved we see how the type can be realized. 

In generality, types are not always realized precisely because 
we cannot always create such an assignment. Let us describe the problem 
of realizing a type over a set of size $\leq |I|$ 
as follows.\footnote{The rest of this section moves into modern 
language for presenting the order, following conventions in \cite{mm-thesis}.}  
Given $M$, $N = M^I/\de$, $|I| = \lambda$, and a 
type $p = \{ \vp_\alpha(x;\bar{a}_\alpha) : \alpha < \lambda \}$ of $N$, 
the fundamental theorem of ultraproducts (``\lost theorem'')  
says for each finite $u \subseteq \lambda$, 
there is a set $\L(u) \in \de$ such that   
$t \in \L(u) \implies M \models \exists x \bigwedge \{ \vp_\alpha(x,\bar{a}_\alpha[t] : \alpha \in u \}$. 
Fix some regularizing family $\{ X_\alpha : \alpha < \lambda \} \subseteq \de$. 
Consider the map $d: [\lambda]^{<\aleph_0} \rightarrow \de$ given by:
\[ u \mapsto \L(u) \cap \bigcap \{ X_\alpha : \alpha \in u \}. \]
This is an example of a \emph{distribution of $p$}. The features of a distribution are:
\begin{itemize}
\item $d: [\lambda]^{<\aleph_0} \rightarrow \de$~ (here we could have said $d: [p]^{<\aleph_0} \rightarrow \de$). 
\item $d$ is monotonic: $u \subseteq v$ implies $d(v) \subseteq d(u)$. 
\item $d(u) \subseteq \L(u)$: each finite set $u$ of conditions is addressed on a $\de$-large set, on which 
they all have a common solution.  
\item for each $t \in I$, $P_t := \{ \alpha < \lambda : t \in d(\{\alpha\}) \}$ is finite.  
\item (accuracy\footnote{This helps in characterization, but will mainly work behind the scenes in this paper.}) for each $t \in I$ and $v \subseteq P_t$, $t \in d(v)$ iff 
$t \in \L(v)$.  
\end{itemize}
The key point is that the various conditions in $P_t$ need not have a common solution: 
by monotonicity $d(u) \cap d(v) \subseteq d(u \cup v)$, 
but equality is not guaranteed to hold unless $d$ is multiplicative.\footnote{For instance, 
if $\vp_\alpha(x; a_\alpha,b_\alpha) = a_\alpha < x < b_\alpha$, 
$\vp_\beta(x; a_\beta, b_\beta) = a_\beta < x < b_\beta$ are two formulas in an ultrapower of dense 
linear order, $t \in d(\{\alpha\})$ tells us ``$(a_\alpha[t], b_\alpha[t])$ is a nonempty interval in $M$,''
$t \in d(\{\beta\})$ tells us ``$(a_\beta[t], b_\beta[t])$ is a nonempty interval in $M$,'' 
but $t \in d(\{\alpha, \beta\})$ asks for the a priori stronger condition that 
they are overlapping intervals in $M$, which will happen for most, but not necessarily all, 
$t \in d(\{\alpha\}) \cap d(\{\beta\})$.  It is 
a nice exercise to show that $p$ is realized if and only if some (equivalently, every) 
distribution $d$ of $p$ has a multiplicative refinement.}  

So we arrive to goodness. Rephrasing a theorem of Keisler in our language:  

\begin{theorem}
If $M$ is a model in a countable language and $\de$ is a regular good ultrafilter on $I$, 
$|I| = \lambda$, then every distribution of every type $p$ over a set of size $\leq \lambda$ has 
a multiplicative refinement, \emph{hence} $M^I/\de$ is $\lambda^+$-saturated. 
\end{theorem}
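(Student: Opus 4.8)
The plan is to prove the two assertions in sequence, since the theorem explicitly splits into a combinatorial claim (goodness gives every distribution a multiplicative refinement) followed by the implication to saturation. For the first part, let $p = \{ \vp_\alpha(x;\bar a_\alpha) : \alpha < \lambda \}$ be a type of $N = M^I/\de$ and let $d$ be a distribution of $p$ as constructed in the preceding discussion. The map $d \colon [\lambda]^{<\aleph_0} \rightarrow \de$ is monotonic by the listed feature, and since $\de$ is good, goodness applies directly to $d$ and yields a multiplicative refinement $g \colon [\lambda]^{<\aleph_0} \rightarrow \de$ with $g(u) \subseteq d(u)$ and $g(u) \cap g(v) = g(u \cup v)$ for all finite $u,v$. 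So the first conclusion is almost immediate from the definition of good; the only thing to verify is that $d$ genuinely falls under the hypothesis of goodness, i.e.\ that it is antimonotonic in the sense of the definition in Section 6, which it is.

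The heart of the theorem is the ``hence'': passing from the existence of a multiplicative refinement to realizing $p$. Here the plan is to run the same scattering argument illustrated in the countable example, but now using $g$ in place of the naive distribution. First I would observe that multiplicativity of $g$ makes the local problems solvable \emph{coherently}: define, for each $t \in I$, the finite set $Q_t = \{ \alpha < \lambda : t \in g(\{\alpha\}) \}$. The key computation is that for any finite $v \subseteq Q_t$ we have $t \in \bigcap_{\alpha \in v} g(\{\alpha\}) = g(v)$ by multiplicativity, and since $g(v) \subseteq d(v) \subseteq \L(v)$, the point $t$ lies in $\L(v)$; this says precisely that $M \models \exists x \bigwedge_{\alpha \in v} \vp_\alpha(x,\bar a_\alpha[t])$. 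Taking $v = Q_t$ itself, \emph{all} the conditions assigned to $t$ now have a common solution in $M$ — which was exactly the obstruction in the general case.

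With this in hand the realization is built as in the example. For each $t \in I$ choose $c_t \in M$ satisfying $\bigwedge_{\alpha \in Q_t} \vp_\alpha(x, \bar a_\alpha[t])$ (using the axiom of choice), and set $c = \langle c_t : t \in I \rangle / \de$. To check $N \models \vp_\alpha(c, \bar a_\alpha)$ for a fixed $\alpha$, note that whenever $t \in g(\{\alpha\})$ we have $\alpha \in Q_t$, hence $c_t$ satisfies $\vp_\alpha(x,\bar a_\alpha[t])$ in $M$; since $g(\{\alpha\}) \in \de$, the \lost theorem gives $N \models \vp_\alpha(c,\bar a_\alpha)$. As $\alpha$ was arbitrary, $c$ realizes $p$. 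Finally, because $M$ and $\de$ were arbitrary subject to $|\ml|$ countable and $p$ ranged over all types over $\leq \lambda$ parameters, every such type is realized, giving $\lambda^+$-saturation of $M^I/\de$.

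The step I expect to be the main obstacle is the bookkeeping around multiplicativity at each coordinate $t$: one must be careful that $Q_t$ is finite (which follows since $g$ refines $d$ and $d$ has finite $P_t$), and that the single witness $c_t$ chosen for $Q_t$ simultaneously handles every $\alpha$ with $t \in g(\{\alpha\})$. The conceptual content is entirely in the equality $g(u) \cap g(v) = g(u \cup v)$ converting the ``a priori stronger'' overlapping condition of the footnote into a genuinely satisfiable one; once that is unwound, the \lost verification is routine.
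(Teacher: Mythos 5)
Your proof is correct and follows exactly the route the paper sketches: goodness applied to the (monotonic) distribution yields the multiplicative refinement, and multiplicativity turns each coordinatewise finite set of conditions into a simultaneously satisfiable one, after which \L o\'s's theorem realizes the type — this is precisely the ``scattering'' mechanism of the countable example together with the exercise in the paper's footnote that realization is equivalent to a distribution having a multiplicative refinement. The points you flag as delicate (finiteness of $Q_t$ via $g(\{\alpha\}) \subseteq d(\{\alpha\})$, and the chain $g(Q_t) = \bigcap_{\alpha \in Q_t} g(\{\alpha\}) \subseteq d(Q_t) \subseteq \L(Q_t)$) are handled correctly.
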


\scbr 
\section{Minimal and maximal}

Keisler noticed that Keisler's order had a minimum class:
\[ \mathbf{T}_{\operatorname{min}} = \{ T : \mbox{ every regular ultrafilter saturates $T$} \} \] 
which includes any theory, such as algebraically closed fields of
fixed characteristic, whose uncountable models are all saturated (hence 
this class is nonempty). 

Keisler also noticed that the order had a maximum class: 
\[ \mathbf{T}_{\operatorname{max}} = \{ T : \mbox{ a regular ultrafilter $\de$ saturates $T$ if and only if $\de$ is good} \}. \]
To see this class is nonempty, first notice that any theory requires a certain amount of goodness from 
an ultrafilter: any monotonic function which arises as a distribution for one of its types must have a 
multiplicative refinement. The point is that for some sufficiently expressive theories, like 
Peano arithmetic or the theory of atomless Boolean algebras, all relevant functions
arise as distributions, and so asking that a regular ultrafilter saturate such a theory $T$ 
requires it to be good (hence able to saturate any other complete countable theory).

\scbr
\section{Stable theories}

When building regular ultrafilters via independent families,  
how to realize some types while omitting others? (In the language of goodness, how to 
produce regular ultrafilters which are not good? In the language of Keisler's order, how to 
distinguish theories?) A first beautiful answer is given by 
Shelah in \emph{Classification Theory}, chapter VI (after his ZFC proof of 
the isomorphism theorem, outlined for interested 
readers in \cite{Sh:a} exercises 3.1-3.5).  

Recall that when $D$ is a filter on $I$, not 
necessarily an ultrafilter, we can still define reduced products which are not necessarily ultraproducts, identifying elements if they agree on a set which we have already decided is large.   
In the inductive constructions above, elements of independent families keep track of partitions of $I$. 
Shelah suggests considering them instead as elements of the Cartesian power $M^I$, and looking 
at how the reduced power $M^I/D$ evolves as we add sets to $D$.  

In this sketch, our model will be fixed as $M = (\omega, <)$. 
Define\footnote{In \cite{Sh:c}, ``psf'' is $\mu(\de)$; $\lcf$ abbreviates lower cofinality, aka reverse
cofinality.}
 two invariants of a regular ultrafilter $\de$ on $\lambda$:  

\begin{itemize}
\item the minimum size of a pseudofinite set modulo $\de$: 
\[ \psf(\de) \] 
which for our purposes, can be computed in $(\omega, <)^\lambda/\de$ as the minimum cardinality  
$\{ a : a < b \}$ as $b$ ranges over nonstandard numbers.\footnote{Identifying $\omega$ 
with its diagonal embedding in the ultrapower, a number $a \in (\omega,<)^\lambda/\de$ is 
called nonstandard if $a>n$ for every $n \in \omega$.}
\item the co-initiality of the set of nonstandard numbers 
in $N = (\omega, <)^\lambda/\de$, called   
\[ \lcf(\omega, \de) \] 
which for us means there is an unfilled $(\omega, \lcf(\omega,\de))$-cut in 
$N$, one side of which is given by the diagonal embedding of $\omega$.  
\end{itemize}
Note that $\lcf(\omega, \de) \leq \psf(\de)$ because $\lcf$ describes a sequence in
the set whose size is described by $\psf$.

Suppose $D$ is a regularizing family on $I$ and 
$\mcf \subseteq{^\lambda \omega}$ is an independent family modulo $D$ of size $2^\lambda$.  
To emphasize, $\mcf$ has range $\omega \leq \lambda$,  
hence each $f \in \mcf$ 
also belongs to the cartesian power $M^I$. The construction 
will involve a choice of initial family $\mcf_0 \subseteq \mcf$ 
along with an ordinal $\rho$ of cardinality $|\mcf_0|$ and uncountable 
cofinality, both fixed later. 

By induction on $\alpha < \rho$, build an increasing continuous 
sequence $\langle D_\alpha : \alpha < \rho \rangle$ of filters extending $D$,
and a decreasing continuous 
sequence $\langle \mcf_\alpha : \alpha < \rho \rangle$ of families of 
functions from $\lambda$ to $\omega$ starting with $\mcf_0$, so that:
\begin{enumerate}
\item[(a)] $\mcf_\alpha$ is independent modulo $D_\alpha$
\underline{and} $D_\alpha$ is maximal for this property\footnote{If $\langle D_\ell : \ell < \gamma \rangle$ is an increasing chain of ultrafilters, and $\mcf$ is independent modulo each $D_\ell$, then 
also $\mcf$ is independent modulo their union; so we can without loss of generality ensure 
the filter is maximal for the property that $\mcf$ remains independent.}  
\item[(b)] $|\mcf_\alpha \setminus \mcf_{\alpha+1}| \leq 1$
\item[(c)] at each stage $\alpha$, we                     
choose some $g_\alpha \in \mcf_\alpha$ and add to $D_{\alpha+1}$ the 
information that $g_\alpha$ is nonstandard: 
\[ D_{\alpha+1} \supseteq \langle D_\alpha \cup \{ \{ t \in I : g_\alpha(t) \geq n \} : n \in \omega \}. \]  
\end{enumerate}
In the background is substantially upgraded technology for 
dealing with independent families of functions and reasoning about the freedom remaining. 
As a result, one can show 
\begin{quotation}
\noindent (1) if $\rho$, the cofinality of the construction, is uncountable 
(more precisely: is greater than the range of the independent family), 
and $\mcf_\rho = \emptyset$, then (a) is enough to ensure we have ended up with an ultrafilter. 
\end{quotation} 
Moreover, if $g_\alpha$ is set to be nonstandard at stage $\alpha+1$, then: 
\begin{quotation}
\noindent (2) if $f \in \mcf_{\alpha+1}$, i.e. $f$ still remains independent, then for any 
future ultrafilter $\de \supseteq D_{\alpha+1}$, we have 
\[ \{ t \in I : g_\alpha(t) > f(t) \} \in \de. \]

\noindent (3) if $|\mcf_{\alpha+1}| = \kappa$, then for any future 
ultrafilter $\de \supseteq D_{\alpha+1}$, 
in the ultrapower $N = M^I/\de$ we have that 
\[ | \{ b \in N : b < [f/\de] \} | \leq \kappa. \]
\end{quotation} 
Now $\lcf$ depends on the cofinality of $\rho$: we walk 
along the transfinite sequence setting elements to be nonstandard as we go, and 
pushing a wave of elements (which remain potentially standard) in front of us. 
So $\langle g_\alpha/\de : \alpha < \rho \rangle$ will form a strictly decreasing sequence 
of nonstandard numbers, of cofinality $\cof(\rho)$. 
If $h \in {^\lambda \omega}$ is such that $h/\de$ is nonstandard, 
necessarily it is above some $g_\gamma/\de$, 
since we just have to wait for the countably many sets attesting to $h$ being nonstandard 
to be added to $\de$, and $\cof(\rho) \geq \aleph_1$. 
This shows $\langle [g_\alpha/\de] : \alpha < \rho \rangle$ 
is cofinal in the nonstandard numbers.   

Suppose we want to make $\psf$, and hence also $\lcf$, small: 
start with $\mcf_0$ say of size continuum.  
Suppose we want to make $\psf$ large and $\lcf$ small: start with $\mcf_0 = \mcf$ 
of size $2^\lambda$ but enumerate it in $\kappa$ blocks each of size $2^\lambda$, 
so that $\cof(\rho) = \kappa$.  
Summarizing:  

\begin{theorem}[Shelah, \cite{Sh:a} VI.3.10] \label{t:sizes} 
Suppose $\lambda$ is an infinite cardinal, 
$\mu = \mu^{\aleph_0}$, $\kappa$ is regular and uncountable, and 
$\kappa \leq \mu \leq 2^\lambda$. There is a regular ultrafilter $\de$ on $\lambda$ 
with $\psf(\de) = \mu$ and $\lcf(\omega,\de) = \kappa$.  
\end{theorem}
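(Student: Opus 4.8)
The plan is to specialize the inductive construction just sketched, tuning the initial family $\mcf_0$ and the length $\rho$ so that the two invariants land on the prescribed values. First I would invoke Fact~\ref{f:234} to fix a regular filter $D$ on $\lambda$ and an independent (modulo $D$) family $\mcf \subseteq {^\lambda\omega}$ of size $2^\lambda$, and then choose a subfamily $\mcf_0 \subseteq \mcf$ with $|\mcf_0| = \mu$, which is possible since $\mu \leq 2^\lambda$. For the length I would take $\rho$ to be an ordinal of cardinality $\mu$ and cofinality $\kappa$ --- concretely the ordinal product $\mu \cdot \kappa$, that is $\kappa$ successive blocks each of order type $\mu$ --- and fix a bijection between the steps $\alpha < \rho$ and the members of $\mcf_0$ so that each function is chosen as the nonstandard $g_\alpha$ exactly once (and hence removed from the family at that step, saturating the bound in (b)). Then $\mcf_\rho = \emptyset$, while for every $\alpha < \rho$ the functions lying in the blocks not yet reached still number $\mu$ --- here $\kappa$ being a limit is what guarantees $\kappa$-many untouched blocks remain --- so $|\mcf_\alpha| = \mu$ throughout. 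Since $\cof(\rho) = \kappa \geq \aleph_1$ exceeds the range $\omega$ of the family and $\mcf_\rho = \emptyset$, statement~(1) above yields that $\de := \bigcup_{\alpha<\rho} D_\alpha$ is an ultrafilter, regular because it extends $D$.

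The value of $\lcf$ then comes for free from the analysis above. By~(2) the sequence $\langle [g_\alpha/\de] : \alpha < \rho\rangle$ is strictly decreasing, and it is coinitial in the nonstandard numbers of $(\omega,<)^\lambda/\de$: any nonstandard $h/\de$ lies above some $g_\gamma/\de$ once the countably many sets witnessing the nonstandardness of $h$ have been absorbed into $\de$, which is legitimate because $\cof(\rho) \geq \aleph_1$. Hence $\lcf(\omega,\de) = \cof(\rho) = \kappa$.

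For $\psf(\de) = \mu$ I would argue two matching inequalities. For the lower bound, let $b$ be any nonstandard element; by coinitiality $b \geq [g_\gamma/\de]$ for some $\gamma$, and by~(2) every $f \in \mcf_{\gamma+1}$ satisfies $[f/\de] < [g_\gamma/\de] \leq b$. Distinct members of a family independent modulo $\de$ differ on a $\de$-large set, hence are distinct in the ultrapower, so $\mcf_{\gamma+1}$ contributes $\mu$ pairwise distinct elements below $b$; thus every pseudofinite set has size $\geq \mu$. For the upper bound I would apply~(3) at a stage $\alpha$ with $|\mcf_{\alpha+1}| = \mu$ to a function $f \in \mcf_{\alpha+1}$, which is nonstandard in $\de$ since it is eventually set to be some $g_\beta$: at most $\mu$ elements lie below $[f/\de]$, so some pseudofinite set has size $\leq \mu$. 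Together these give $\psf(\de) = \mu$, compatible with $\lcf(\omega,\de) \leq \psf(\de)$ and the hypothesis $\kappa \leq \mu$.

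The genuine work, which I am assuming from the discussion above, is the ``upgraded technology'' behind (1)--(3): the lemmas on deciding a set at the cost of one independent function, the maximality clause in~(a) that forces the limit filter to be an ultrafilter, and the estimate in~(3) on how many elements can sit below a surviving function. Within the present argument the subtle point is the hypothesis $\mu = \mu^{\aleph_0}$, and I expect it to be the main obstacle to pinning the value exactly. It is in fact \emph{necessary}: in a regular ultrapower the minimal pseudofinite set is the domain of an internal structure and, using $\aleph_1$-saturation to absorb countable external sequences, has cardinality closed under $\aleph_0$-powers, so $\psf(\de)^{\aleph_0} = \psf(\de)$ always. Thus $\mu = \mu^{\aleph_0}$ is exactly the condition making $\mu$ an attainable value, and it is what one must carry through the counting underlying the upper bound in~(3). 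The only remaining delicacy is bookkeeping: laying out the $\kappa$ blocks of length $\mu$ so that $|\mcf_\alpha|$ stays equal to $\mu$ cofinally (which drives the lower bound) while still exhausting $\mcf_0$ by stage $\rho$ (which gives $\mcf_\rho = \emptyset$, and hence the ultrafilter).
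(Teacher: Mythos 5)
Your proposal is correct and follows essentially the same route as the paper's own sketch: fix $\mcf_0$ of size $\mu$ and run the induction along an ordinal of cardinality $\mu$ and cofinality $\kappa$ (the paper's ``$\kappa$ blocks''), then read off $\lcf$ from the cofinality of the decreasing sequence $\langle g_\alpha/\de\rangle$ and $\psf$ from facts (2) and (3). The only quibble is that in your lower bound for $\psf$ the distinctness of the $\mu$ elements below $b$ is better justified by the strict decrease of the sequence $\langle g_\beta/\de : \beta > \gamma\rangle$ than by appealing to independence of $\mcf_{\gamma+1}$ ``modulo $\de$'' (the family is only independent modulo $D_{\gamma+1}$, and that independence is consumed at later stages); your closing remarks on where $\mu=\mu^{\aleph_0}$ enters are accurate.
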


This is a theorem on ultrafilters, but part of its power 
comes from the way it works with the theory of stability developed in \cite{Sh:a} chapters I-III, 
the major model theoretic dividing line of the last half-century.  
Shelah proves as a consequence of his development of stability 
that a model of a stable theory is $\lambda^+$-saturated if it is $\kappa(T)$-saturated
 and every maximal indiscernible set is large.  
Very briefly, this means that in regular ultrapowers of stable theories, 
saturation depends on whether or not pseudofinite sets are large, that is, 
whether $\psf$ is large, and for saturation of  
for unstable theories, it is necessary (but not sufficient) that $\lcf$ is large. 
So the union of the first two classes in Keisler's order is precisely the stable theories,  
with a division at the finite cover property, which determines whether the theory is 
sensitive to $\psf$ being small. There is also a strict division between 
stable and unstable in Keisler's order, witnessed by ultrafilters with large 
$\psf$ and small $\lcf$. Hence Keisler's order independently detects stability.  

\br

I read these theorems carefully as a graduate student and found them very inspiring, 
both because of what they show can be done and the remaining open problems they raised.

\scbr
\section{Locality}

Here I mention some work from my thesis and early papers which will be invoked below. 

First, Keisler's order is local: if $\de$ a is a regular ultrafilter on $I$, 
$|I| = \lambda$, $T$ is a complete countable theory, $M \models T$ and 
$M^I/\de$ is $\lambda^+$-saturated for $\vp$-types for all formulas $\vp$, then 
it is $\lambda^+$-saturated \cite{mm1}. 
So we are justified in looking formula by formula, going one $\vp$ at a time below. 
(Note that this doesn't say that to each theory we associate a single formula which is 
a bellwether for its saturation, but simply that any instance of non-saturation must 
be witnessed by omitting a $\vp$-type for some $\vp$.) 

This re-focuses attention on the patterns of consistency and inconsistency 
visible in the parameter space of formulas. 

Second, here is a measurement of regularizing sets which is visible 
to first-order theories. If $\de$ is a regular filter on $I$, $|I| = \lambda$, and 
$\mathbf{X} = \{ X_\alpha : \alpha < \lambda \}$ is a regularizing family, by definition for every 
$t \in I$ there is a finite $n_t \geq | \{ \alpha < \lambda : t \in X_\alpha \} |$. 
Let $n_* = \prod n_t/\de$ be the corresponding nonstandard natural number. Then let us say 
``$\mathbf{X}$ is below $n_*$.'' Say that $\de$ is \emph{flexible} if  
for any nonstandard $n_*$, $\de$ contains a regularizing family below $n_*$. This 
property was introduced and studied in \cite{mm4} and turned out, fortuitously, 
to coincide with a property called OK studied by Kunen and Dow.\footnote{This interesting 
property says that any monotonic $f: [\lambda]^{<\aleph_0} \rightarrow \de$ which, in addition, 
satisfies that $|u| = |v|$ implies $f(u) = f(v)$, has a multiplicative refinement. This 
seems a priori much simpler than good but this was not obvious.}   
Moreover, it is detected by model theory: if, relative to the theory $T$, the formula 
$\vp$ is non-simple or non-low, then in order for $\de$ to be good for $T$ it is necessary 
that $\de$ be flexible \cite{mm5}.  
Note that, even without the connection to OK, this already 
tells us that good implies flexible (since whether or not such regularizing families 
exist is visible to a type, hence to a question of multipicative refinement).  

Below, this will mean that 
we can prove failure of goodness by failure of flexibility. After the connection to OK, 
it also means we can potentially use model-theoretic tools to approach questions such as  
separating OK and good, asked by Dow in 1985.  

This is a good place to mention something I found particularly beautiful about the 
problem of Keisler's order as it was developed and advanced in Shelah's book 
(this is both a response to what was written and an interpretation).  
Theories are a priori very complicated objects, and ultrafilters may allow us 
to separate the major differences from the minor ones. 
Ultrafilters are perhaps difficult to see directly, but we can look 
obliquely by observing their effect on theories.  

\scbr 
\section{Separation of variables}

This is the subject of \cite{MiSh:999} proved during a wonderful year in Jerusalem. 

Separation of variables says 
that we can reframe the problem of building regular 
ultrafilters on $\lambda$ as a problem about building ultrafilters (regular or not) 
on Boolean algebras. To explain this statement requires several steps. 

When building a regular ultrafilter on $I$, $|I|=\lambda$ by induction on (say)
$\alpha < 2^\lambda$, we can ask at some intermediate stage $\alpha$ what is the
quotient Boolean algebra $\mcp(I)/D_\alpha$. For instance, if $\mcf_\alpha \subseteq {^\lambda \kappa}$,
$|\mcf_\alpha|=2^\lambda$ and $D_\alpha$ is maximal for $\mcf_\alpha$ being independent, then
the quotient is essentially the completion of the free Boolean algebra generated by
$2^\lambda$ independent antichains each of size $\kappa$.
Step 1 says that we can arrange for the quotient to be any complete $\ba$ which looks minimally like $\mcp(I)$ as measured by size and width: 
\begin{quotation} 
\noindent \emph{Step 1.} If $\ba$ is a complete Boolean algebra of size $\leq 2^\lambda$ 
and the $\lambda^+$-cc, 
there is a regular excellent [see below] filter $\de_0$ and a surjective homomorphism  
$\jj: \mcp(I) \rightarrow \ba$ with $\jj^{-1} = \de_0$.  
\end{quotation}
Next observe: 
\begin{quotation}
\noindent \emph{Step 2.} Suppose $\de_0$, $\jj$, $\ba$ are from Step 1. 
If $\mfd$ is any ultrafilter on $\ba$, this induces an ultrafilter $\ld \supseteq \de_0$ on 
$I$ by: $A \in \ld$ if $\jj(A) \in \mfd$. 
The choice of $\mfd$ and a model $M$ thus determines an 
ultrapower $N = M^I/\ld$ (call such an $N$ an \emph{enveloping ultrapower} for 
$\ba$). We will call such a constellation of objects an 
\emph{ultrafilter frame} $\mathfrak{u} = (I, \de_0, \mfd, \ld, \ba, j)$.   
\end{quotation}
What are the appropriate traces of types in $\ba$? Here is a literal definition:  
\begin{quotation}
\noindent \emph{Step 3.} 
Say that $\bar{\mb} = \langle \mb_u : u \in [\lambda]^{<\aleph_0} \rangle$ is a 
\emph{possibility pattern} for $(T, \vp)$ if in some enveloping ultrapower there 
is a $\vp$-type $p$ and a distribution $d: [\lambda]^{<\aleph_0} \rightarrow \de$ of $p$ 
and $\mb_u = \jj(d(u))$ for all $u \in [\lambda]^{<\aleph_0}$.    
\end{quotation}
In other words, we look for the images of distributions of types in some enveloping  
ultrapower. 
There is an equivalent combinatorial 
characterization.\footnote{The combinatorial condition on $\bar{\mb}$ asks:
suppose $U \subseteq \lambda$ is finite and 
suppose $\mc \in \ba$ is a nonzero element contained in some nonempty region of the
Venn diagram made by the elements $\mb_{\{\alpha\}}$, for
$\alpha \in U$: so either $\mc \leq \mx$ or $\mc \cap \mx = 0$ for $\mx \in \{\mb_v : v \subseteq U \}$.
Can we always find parameters $\bar{a}_\alpha$ for instances of $\vp(x,\bar{y})$ in some $M \models T$
whose consistency and inconsistency exactly reflects what is seen by $\mc$, i.e.,     
for $v \subseteq U$, ~
$\exists x \bigwedge \{ \vp(x,\bar{a}_\alpha) : \alpha \in v \}$ if and only if $\mc \leq \mb_v$?
}
Now we check: has $\ba$ lost essential information? Suppose we built our $\mfd$ on $\ba$ 
to be good, basically as before except that we work on $\ba$ rather than on $I$:  
list all possibility patterns cofinally often, and inductively add 
multiplicative refinements to any pattern which has been added to the filter in progress. Does 
this create goodness for $\de$? A priori no: if $\jj$ takes $B_u, B_v, B_{u \cup v}$ to 
$\mb_u, \mb_v, \mb_{u \cup v}$ respectively, then $\mb_u, \mb_v, \mb_{u \cup v} \in \mfd$ implies 
$B_u, B_v, B_{u \cup v}$ in $\de$, but  ``$\mb_u \cap \mb_v = \mb_{u \cup v}$'' in $\ba$ only 
guarantees that ``$B_u \cap B_v = B_{u \cup v} \mod \de_0$'' (which trivially follows from 
these three sets belonging to $\de$).      
This is where ``$\de_0$ is excellent'' comes in: it is a condition which allows us to 
``simultaneously and uniformly'' erase errors from certain sequences of subsets of $I$, 
so that ``$\emptyset \mod \de_0$'' becomes ``$\emptyset$''. In this case, we
 show that realization of types does transfer. Part of the 
work\footnote{It is shown in our paper that `good' can replace `excellent' in the 
statement of separation of variables given here, so we leave the description informal here.} 
is to 
give an appropriate definition of the errors to be erased (multiplicative 
modulo $\de_0$ becomes multiplicative, \emph{but} we do not add intersections for 
all sequences of elements of $\de_0$).  

Summing up, separation of variables says that we can transfer 
construction problems in Keisler's order to problems of building 
ultrafilters which can solve ``possibility patterns'' 
(the representations of types) 
on any reasonable, complete Boolean algebra of our choosing: 
 
\begin{theorem}[Separation of variables, informal statement of \cite{MiSh:999}] 
Fix $I$, $|I| = \lambda$. 
\begin{enumerate}
\item Given any complete $\ba$ which has size $\leq 2^\lambda$ and 
the $\lambda^+$-c.c., there exists a regular excellent filter $\de_0$ on $I$ and 
a surjective homomorphism $\jj: \mcp(I) \rightarrow \ba$ with $\jj^{-1}(1) = \de_0$. 
\item Then, given any ultrafilter frame $\mathfrak{u} = (I,  \de_0, \fa, \ld, \ba, \jj )$,  
for any complete countable $T$ the following are equivalent:
\begin{enumerate}
\item any sequence $\langle \mb_u : u \in \lao \rangle$ of 
elements of $\fa$ which is a possibility pattern for $T, \vp$ has, in $\fa$, a multiplicative 
refinement;   
\item for any $M \models T$, $M^I/\ld$ is $\lambda^+$-saturated.
\end{enumerate} 
\end{enumerate} 
\end{theorem}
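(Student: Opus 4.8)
The plan is to establish the two parts in turn: part (1) as a construction, part (2) as a transfer of the distribution criterion for saturation across the homomorphism $\jj$. For part (1) I would build $\jj$ from the independent-family picture sketched above. Using Fact~\ref{f:234}, fix a regular filter on $I$ together with an independent family $\mcf \subseteq {}^\lambda\lambda$ of size $2^\lambda$, and extend it to a filter $\de_*$ maximal for the independence of $\mcf$; as remarked in the separation-of-variables discussion, the quotient $\mcp(I)/\de_*$ is then the completion of a free Boolean algebra generated by $2^\lambda$ independent antichains. Because $\ba$ is complete, of size $\leq 2^\lambda$, and $\lambda^+$-c.c., it is a complete homomorphic image of this canonical algebra (this is exactly where the three hypotheses on $\ba$ are used); composing the two surjections gives $\jj\colon \mcp(I)\to\ba$, and I would set $\de_0 = \jj^{-1}(1_\ba)$. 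This $\de_0$ is a filter extending $\de_*$, hence regular, and surjectivity of $\jj$ is immediate. The delicate point is \emph{excellence}: I would build into the inductive construction of $\de_*$ enough reserved, suitably multiplicative independence that $\de_0$ inherits the capacity to erase systematic $\mathrm{mod}\,\de_0$ errors.

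For part (2), fix a frame $\mathfrak{u} = (I,\de_0,\fa,\ld,\ba,\jj)$ and a complete countable $T$. First I would invoke locality \cite{mm1} to reduce $\lambda^+$-saturation of $M^I/\ld$ to $\lambda^+$-saturation for $\vp$-types, so that a single formula $\vp$ may be fixed throughout. The engine is the criterion from the distributions section: a $\vp$-type $p$ over $\leq\lambda$ parameters is realized in $M^I/\ld$ if and only if some (equivalently every) distribution $d$ of $p$ has a multiplicative refinement, where multiplicativity means \emph{exact} equality of subsets of $I$, $g(u)\cap g(v) = g(u\cup v)$, together with $g(u)\subseteq d(u)$. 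The role of $\jj$ is to translate such statements in $\mcp(I)/\ld$ into statements about images in $\ba$ relative to $\fa$, which is precisely the data recorded by a possibility pattern.

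For (b)$\Rightarrow$(a): given a possibility pattern $\bar{\mb}$ for $(T,\vp)$, I would use its combinatorial characterization to realize it as the $\jj$-image of a genuine distribution $d$ of an actual $\vp$-type $p$ in the given enveloping ultrapower $M^I/\ld$, choosing parameters $\bar{a}_\alpha$ and preimages $d(u)$ with $\jj(d(u)) = \mb_u$ whose consistencies and inconsistencies match the Venn-diagram data carried by the nonzero elements below the $\mb_{\{\alpha\}}$. Saturation then realizes $p$, so $d$ has a multiplicative refinement, and applying $\jj$ produces a multiplicative refinement of $\bar{\mb}$ in $\fa$. For (a)$\Rightarrow$(b): given a $\vp$-type $p$ in $M^I/\ld$ and a distribution $d$, the sequence $\langle \jj(d(u)) : u\in\lao\rangle$ is a possibility pattern, so by (a) it has a multiplicative refinement $\langle \mb'_u\rangle$ in $\fa$. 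Pulling back along $\jj$ gives sets $B'_u\subseteq d(u)$ lying in $\ld$ and refining $d$, but multiplicative only \emph{modulo} $\de_0$ (since $\jj$ has kernel $\de_0$); excellence is then used to correct the preimages to $B''_u$ with $B''_u\cap B''_v = B''_{u\cup v}$ on the nose, still below $d$, yielding an exact multiplicative refinement and hence realizing $p$.

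The main obstacle, on both sides, is the excellence machinery. In part (1) one must formulate excellence so that excellent regular filters admitting a quotient onto an arbitrary admissible $\ba$ actually exist; in the direction (a)$\Rightarrow$(b) one must verify that this same notion is strong enough to convert ``multiplicative modulo $\de_0$'' into ``multiplicative on the nose'' while preserving the refinement $g(u)\subseteq d(u)$ of the distribution. Pinning down exactly which errors must be erasable — strong enough to drive the transfer, yet weak enough to be achievable in the construction — is the crux, and reconciling these two demands is where the real work lies.
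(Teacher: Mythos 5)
Your proposal follows essentially the same route the paper sketches: for (1), the independent-family/maximal-filter construction making $\mcp(I)/\de_*$ the completion of a free algebra on $2^\lambda$ antichains and then quotienting onto $\ba$; for (2), the reduction to $\vp$-types by locality, the ``type realized iff a distribution has an exact multiplicative refinement'' criterion, pushing forward along the homomorphism $\jj$ for one direction and pulling back plus correcting ``multiplicative mod $\de_0$'' to ``multiplicative on the nose'' via excellence for the other. The one ingredient you leave open --- the precise formulation and construction of excellence --- is exactly the ingredient the paper itself declines to make precise (deferring to \cite{MiSh:999}), so your sketch is consistent with the paper's treatment.
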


\br
This theorem reframed our approach to ultrafilter construction. Here are two early consequences. 
First, by choosing $\ba$ to be the completion of a free Boolean algebra generated by  
$2^\lambda$-many independent countable antichains,  
we see a division in Keisler's order  
within the simple unstable theories, 
with stable and the random graph on one side, 
and non-low or non-simple on the other.\footnote{This 
statement incorporates later understanding.}

Second, we can now use set theoretic hypotheses which are a priori incompatible with regularity 
to build $\mfd$ on $\ba$, and still end up with $\de$ a regular ultrafilter! Thus 
assuming existence of a measurable cardinal, we prove there are regular ultrafilters on 
all sufficiently large $\lambda$ which are flexible and not 
even 
$(2^{\aleph_0})^+$-good, giving a conditional answer to an old question of Dow mentioned above.\footnote{In ZFC, at the time of writing, this remains open.}

\scbr
\section{Continuum many classes}

There were a number of advances on Keisler's order building on separation of variables. 
For the purposes of this paper and the narrative of regular ultrafilter construction, 
let me focus on a theorem of \cite{MiSh:1167}. There 
we proved that Keisler's order has the maximum number of classes, continuum many, 
definitively answering Keisler who in 1967 had asked if there were more than two.

The many classes arise in an interesting way that suggest the order still has 
much to teach us about model theory. They are within the
simple unstable theories with trivial forking; on other large parts of the 
model theoretic map, Keisler's order has few classes.  We have already seen that stable theories fall into only 
two classes; more generally, all NIP theories and indeed all theories with $SOP_2$ are
contained in three classes, see \cite{MiSh:998}. 
 
Perhaps Keisler's order is sensitive to fine calibrations of randomness within 
simplicity just as long as there is not enough other structure to add noise. And perhaps, also, 
simple theories are more interesting than has been understood.  

To say a few words about the construction:   
Separation of variables in some sense says that if we produce a complete Boolean 
algebra $\ba$ (of the right size and width) along with an ultrafilter $\mfd$ on $\ba$ 
\emph{then} it is possible to find an $I$, $\de_0$, $\jj$ which combine with 
$\ba$ and $\mfd$ to give us a regular $\de$. So the construction problem focuses 
on Boolean algebras and the ultrafilters on them.   
In order to find these continuum many classes, we first defined a suitable family of 
continuum many theories, based on combinations of unary predicates and random graphs.   
Then we built ultrafilters and Boolean algebras 
together by induction for each given theory, essentially inductively extending 
the Boolean algebra at a given successor step to add a solution to 
some possibility pattern and then extending the ultrafilter to contain the 
solution just added.  We proved by means of a chain condition that the 
Boolean algebra and ultrafilter tailor-made in this way for each one of our given 
theories would fail to saturate any of the other given theories.

\scbr 
\section{The canonical Boolean algebra}

This discussion brings us to a very recent understanding, from \cite{MiSh:F2356}. 

Notice that the second, combinatorial definition of ``possibility pattern'' above does not 
mention ultrafilters.  
In the theorem of the previous section, 
we had built $\ba$ and $\mfd$ together 
by induction so that the algebra and the ultrafilter could together be tailored to $T$.  
We recently understood the algebra can stand alone:\footnote{
Note there have been previous significant Boolean algebras in classification theory, such as in
\cite{Sh:93}. There doesn't seem to be a a strong connection to the canonical Boolean algeba 
at the moment.  
Perhaps this will become clearer in future work. But also, sometimes 
a certain kind of central object may appear earlier in different forms,
as with the relation of splitting, strong splitting, 
weak dividing and so forth to dividing and forking.}  

\begin{quotation} 
\noindent (Informal version) 
To any $T$ and $\lambda$ we can associate the ``canonical Boolean algebra for $T$'' 
which arises essentially by inductively adding formal solutions to all previous 
possibility patterns 
(indexed by $[\lambda]^{<\aleph_0}$, for each formula $\vp$) as generically as possible.  
\end{quotation}
Less informally, given a complete $\ba_\beta$ 
and a possibility pattern $\langle \mb_u : u \in \lao \rangle$ for $T, \vp$,  
the corresponding \emph{single-pattern extension} of $\ba_\beta$ means: 
\begin{enumerate}
\item adding to the generators of $\ba$ a new set $\{ \mb^1_{\{\alpha\}} : \alpha < \lambda \}$
\item adding to the relations of $\ba$ new rules saying that for each finite $u \subseteq \lambda$, 
$\bigcap \{ \mb^1_{ \{ \alpha \} } : \alpha \in u \} \leq \mb_u$. 
\item then taking the completion of the result.  
\end{enumerate}
To make such an extension more obviously canonical, given a complete $\ba_\beta$, a cardinal 
$\lambda$, a theory $T$ and a set of formulas 
$\Delta$, we can define \emph{the} pattern extension $\ba_{\beta+1}$ of $\ba_\beta$ 
by taking the completion of the free product over $\ba_\beta$ (in the natural sense) 
of all single-pattern extensions.  In other words, we formally solve all relevant possibility patterns. 
We can now (informally) define:
\begin{quotation}
\noindent The \emph{canonical Boolean algebra} for $\lambda, T, \Delta$ is the result 
$\ba = \ba_{2^\lambda}$ of the following construction by induction on $\alpha < 2^\lambda$. 
Begin with $\ba_0 = \ba^1_{\aleph_0, \aleph_0, \aleph_0}$, 
the completion of the free Boolean algebra on countably many independent antichains. 
For $\alpha = \beta+1$, let $\ba_{\alpha}$ be the pattern extension of 
$\ba_\beta$, which is by construction complete. 
For $\alpha$ limit, take the union and then take the completion.  
\end{quotation}
It is important to say at the beginning that the word ``canonical'' shouldn't hide or 
constrain the basic flexibility of this construction, which is one of its neatest features. 
For instance, we may prefer that our generation produces positive intersections 
(when not forbidden by the relations) of fewer than $\theta$ elements for some uncountable $\theta$, 
rather than the simplest case of $\theta = \aleph_0$. Or we might want to make a global restriction 
on possibility patterns we solve which has model theoretic meaning, 
such as being based on a given set.    

Here is a first theorem from the book in progress: 

\begin{theorem}[\cite{MiSh:F2356}] 
When $T$ is the random graph, the canonical Boolean algebra $\ba$  
satisfies the following strong chain condition: for $\kappa = \aleph_1$, given any 
sequence of $\kappa$ nonzero elements of $\ba$, there exists a subsequence of size 
$\kappa$ on which any finitely many elements have nonempty intersection.  
When $\theta = \aleph_1$ and $\kappa = {(2^{\aleph_0})^+}$ 
this extends to any simple theory in the case where the formulas 
in $\Delta$ have trivial forking, meaning forking arises only from equality. 
\end{theorem}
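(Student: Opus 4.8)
The conclusion is that $\kappa$ is a \emph{precaliber} of $\ba$: every sequence $\langle \md_i : i < \kappa \rangle$ of nonzero elements has a centered subsequence of length $\kappa$, meaning any finitely many of its terms meet in a nonzero element. The plan is a chain of reductions — pass to a dense subalgebra, thin out by a $\Delta$-system, and refine by type — after which the model theory of $T$ supplies the decisive compatibility.

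\textbf{Reduction to monomials.} Since $\ba$ is a completion, each nonzero element lies above a nonzero element of the subalgebra $\ba'$ generated (before the final completion) by all generators used in the construction: the countably many independent antichains of $\ba_0$ together with the pattern-generators $\mb^1_{\{\alpha\}}$ added at successor stages. Writing an element of $\ba'$ in disjunctive normal form and passing to one nonzero disjunct, I may assume each $\md_i$ is a single \emph{monomial}, a finite meet $\bigwedge_{g \in s_i} g^{t_i(g)}$ of generators and complements (exponent convention $g^1 = g$, $g^0 = \ds g$), with finite \emph{support} $s_i$. Centeredness of the original $\md_i$ inside $\ba$ then follows from centeredness of these monomials inside $\ba'$, as a nonzero meet computed in the subalgebra $\ba'$ is a nonzero lower bound for the meet of the original elements.

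\textbf{$\Delta$-system and refinement.} As $\kappa$ is regular and uncountable (both $\aleph_1$ and $(2^{\aleph_0})^+$ are), the $\Delta$-system lemma thins $\langle s_i : i<\kappa\rangle$ to a subsequence of length $\kappa$ whose supports form a $\Delta$-system with finite root $R$, pairwise disjoint outside $R$. A countable base $\ba_0$ now makes one step immediate: the $\ba_0$-part of each monomial takes only one of countably many values, so on a further $\kappa$-subsequence it is constant and may be absorbed into the root. I then refine so that all monomials share the same \emph{type over $R$}: the same signs on $R$, and the same possibility-pattern data relating their non-root pattern-generators to $R$. The number of refinement classes is the number of $\vp$-configurations over the finite set $R$. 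For the random graph, which is $\aleph_0$-categorical, this is \emph{finite}, so countably many classes suffice and $\kappa = \aleph_1$ works. For a general simple theory there can be up to $2^{\aleph_0}$ such configurations, which is precisely why the threshold rises to $\kappa = (2^{\aleph_0})^+$; the width parameter is correspondingly raised to $\theta = \aleph_1$ so that the single-pattern extensions are generic enough for the type-amalgamation of a simple theory to be reflected as genuine compatibility in $\ba$.

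\textbf{Centeredness, the main obstacle.} It remains to show that any finitely many terms $\md_{i_1}, \dots, \md_{i_m}$ of the refined subsequence meet in a nonzero element of $\ba'$. Here I would establish, by induction on the stages of the construction, a \emph{consistency criterion}: a meet of generators is nonzero in $\ba$ exactly when the combined configuration of $\vp$-instances it encodes is realizable in some $M \models T$ — this is the content of the combinatorial characterization of possibility patterns, with the genericity of each single-pattern extension (free product, then completion) providing independence outside the imposed relations. Granting the criterion, the $\ba_0$-part and the $R$-part of the combined meet coincide across all $m$ terms and contribute a single consistent configuration, while the non-root pattern-generators of distinct terms involve \emph{disjoint} parameters over the common root $R$. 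Because forking in $T$ is trivial — inconsistency arises only from equality of parameters — amalgamating these $m$ individually consistent configurations over $R$ creates no new inconsistency, so the combination is realizable and the meet is nonzero. For the random graph this is transparent, since one merely unions compatible edge/non-edge demands on distinct vertices; for simple trivial-forking theories it is the independence theorem over $R$ that licenses the amalgam, and verifying that this model-theoretic amalgamation is faithfully mirrored by the stagewise, free-product-and-completion construction of $\ba$ is the hardest point, and the reason the hypotheses on $\theta$ and $\kappa$ take the stated form.
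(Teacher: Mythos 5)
First, note that this survey states the theorem only as a citation to the book manuscript \cite{MiSh:F2356} and contains no proof of it, so there is no in-paper argument to compare yours against; what follows assesses the proposal on its own terms. Your architecture --- pass to a dense set of finite meets of generators, apply the $\Delta$-system lemma to the supports, stabilize the data over the root, then amalgamate using the model theory of $T$ (trivially for the random graph, via the independence theorem for simple $T$ with trivial forking) --- is the standard shape of the chain-condition arguments in this line of work (compare \cite{MiSh:1167}), and the roles you assign to $\theta$ and $\kappa$ are the right ones.

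There are, however, two genuine gaps. The first is in the reduction and the counting. The construction completes the algebra at every successor and limit stage, and the relations imposed at stage $\beta+1$ have the form $\bigcap\{\mb^1_{\{\alpha\}} : \alpha \in u\} \leq \mb_u$, where the bounds $\mb_u$ are arbitrary elements of the completed algebra $\ba_\beta$, not Boolean combinations of the original generators. Consequently (i) it is not automatic that the subalgebra generated by the generators alone is dense --- density does not pass through an iteration of completions unless one checks that each pattern extension is sufficiently free over the previous completion --- and (ii) the ``type over $R$'' of a monomial is not determined by a $\vp$-configuration over a finite set: it depends on which elements $\mb_u$ of earlier stages constrain its non-root generators, and there are up to $2^\lambda$ of these, so you cannot stabilize that data on a subsequence of length $\aleph_1$ (or $(2^{\aleph_0})^+$) by cardinality alone. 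Your appeal to $\aleph_0$-categoricity counts model-theoretic types, not these Boolean-algebraic constraints; the real argument must show that after the $\Delta$-system step the off-root constraints are irrelevant to centeredness because of the freeness of the extensions, which is not a counting statement. The second gap is that your ``consistency criterion'' --- a meet of generators is nonzero iff the configuration it encodes is realizable in a model of $T$ --- is precisely the content of the theorem, and you only assert that you ``would establish'' it by induction on stages. Everything difficult lives there: one must verify that the free product of single-pattern extensions followed by completion creates no unintended zero meets, and that the independence theorem over the root $R$ (a set of Boolean elements, not of model-theoretic parameters, so a translation is required) really licenses the amalgam inside the algebra. As written, the proposal correctly identifies where the proof must go but does not supply the step that makes the theorem true.
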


\scbr
\section{Disclaimer}

Above we have left out many crucial parts of the story: 
regularity lemmas, the conditional characterization of simplicity, many classes, 
the maximal class (and $\mathfrak{p} = \mathfrak{t}$), the cut spectrum,  
the role of free Boolean algebras, 
many model theoretic developments --  
and of course all of the theorems which, 
at the time of this writing, still belonged to the future.

\vspace{2mm}

\end{document}